\numberwithin{equation}{section} %% Comment out for sequentially-numbered
\numberwithin{figure}{section} %% Comment out for sequentially-numbered
  \theoremstyle{plain}
  \theoremstyle{plain}
  \theoremstyle{plain}
  \theoremstyle{remark}
  \theoremstyle{remark}
  \theoremstyle{plain}
\newcommand{\R}{\mathbb{R}}
\def\<{{\langle }}
\def\>{{\rangle }}
\def\<{{\langle }}
\def\>{{\rangle }}
\theoremstyle{plain}
\newtheorem{theorem}{Theorem}[section]
\newtheorem{proposition}{Proposition}[section]
\newtheorem{remark}{Remark}[section]
\newtheorem{lemma}{Lemma}[section]
\title[Examples of compact embedded convex $\lambda$-hypersurfaces]
{Examples of compact embedded convex $\lambda$-hypersurfaces}
\author{Qing-Ming Cheng, Junqi Lai and  Guoxin Wei}
\address{Qing-Ming Cheng \\ Department of Applied Mathematics, Faculty of Science,
Fukuoka  University, 814-0180, Fukuoka,  Japan, cheng@fukuoka-u.ac.jp}
\address{Junqi Lai \\  School of Mathematical Sciences, South China Normal University,
510631, Guangzhou,  China, 2019021668@m.scnu.edu.cn}
\address{Guoxin Wei \\  School of Mathematical Sciences, South China Normal University,
510631, Guangzhou,  China, weiguoxin@tsinghua.org.cn}
\begin{document}
\maketitle

\begin{abstract}
In the paper, we construct compact embedded convex $\lambda$-hypersurfaces which are diffeomorphic to a sphere and are not isometric to
a  standard  sphere. As the special case of our result, we solve Sun's problem (Int Math Res Not: 11818-11844, 2021). In this sense, one can not expect to have Alexandrov type theorem for $\lambda$-hypersurfaces.
\end{abstract}

\footnotetext{The first author was partially  supported by JSPS Grant-in-Aid for Scientific Research (B):  No.16H03937 and No.22K03303.
The third author was partly supported by NSFC Grant No.12171164, GDUPS (2018).}

\section{Introduction}

 A hypersurface $\Sigma^n \subset \R^{n+1}$ is called a $\lambda$-hypersurface if it satisfies
\begin{equation}\label{0316eq1.1}
     H + \< X, \nu\> = \lambda,
\end{equation}
where $\lambda$ is a constant, $X$ is the position vector, $\nu$ is an inward unit
normal vector and $H$ is the mean curvature. %satisifies ${\bf H} = H(-\nu)$(${\bf H}$ is the mean curvature vector).
The notation of $\lambda$-hypersurfaces were first introduced by Cheng and Wei in \cite{CW} (also see \cite{MR}). Cheng and Wei \cite{CW} proved that $\lambda$-hypersurfaces are critical points of the weighted area functional with respect to weighted
volume-preserving variations. This equation of $\lambda$-hypersurfaces also arises in the study of isoperimetric problems in weighted (Gaussian) Euclidean spaces, which is a long-standing topic studied in various fields in science. $\lambda$-hypersurfaces can also be viewed as stationary solutions to the isoperimetric problem in the Gaussian space. For more information on $\lambda$-hypersurfaces, one can see \cite{CW} and \cite{MR}.

Firstly, we give some examples of $\lambda$-hypersurfaces. It  is well known that  there are several special complete embedded solutions to (\ref{0316eq1.1}):
\begin{itemize}
	\item hyperplanes with a distance of $|\lambda|$ from the origin,
	\item sphere with radius $\frac{-\lambda + \sqrt{\lambda^2 + 4n }}{2}$ centered at origin,
	\item cylinders with an axis through the origin and radius $\frac{-\lambda + \sqrt{\lambda^2 + 4(n - 1) }}{2}$.
\end{itemize}

\noindent
In \cite{CW1}, Cheng and Wei constructed the first nontrivial example of a $\lambda$-hypersurface which is diffeomorphic to $\mathbb S^{n-1} \times \mathbb S^1$ by using techniques similar to Angenent \cite{A}. In \cite{R}, using a similar method to McGrath \cite{M}, Ross constructed a $\lambda$-hypersurface in $\R^{2n+2}$ which is diffeomorphic to $\mathbb S^{n} \times \mathbb S^{n} \times \mathbb S^1$ and exhibits a $SO(n) \times SO(n) $ rotational symmetry. In \cite{LW}, Li and Wei constructed an immersed $S^n$ $\lambda$-hypersurface using a similar method to \cite{D}. It is quite interesting to find other nontrivial examples of $\lambda$-hypersurfaces.

Secondly, we introduce some rigidity results about $\lambda$-hypersurfaces. If $\lambda=0$, $\langle X, \nu\rangle +H=\lambda=0$, then $X:\Sigma^n\to  \mathbb{R}^{n+1}$ is a self-shrinker of  mean curvature flow, which plays an important role for study on singularities of the mean curvature flow. Abresch and Langer \cite{AL} proved that the only $1$-dimensional compact embedded self-shrinker is the circle. Huisken \cite{H1} proved any compact embedded, and mean convex ($H\geq0$) self-shrinkers are spheres. Later, Colding and Minicozzi \cite{CM} generalized Huisken's results to the case of complete self-shrinkers.

\noindent
If $\lambda\neq0$, there are relatively few results about $\lambda$-hypersurfaces. In \cite{CW}, Cheng and Wei proved that generalized cylinders $\mathbb S^{m} \times \mathbb R^{n-m}$, $0\leq m\leq n$ are the  only complete embedded $\lambda$-hypersurfaces with polynomial volume growth in $\mathbb R^{n+1}$ if
$H-\lambda\geq0$ and $\lambda (f_3(H-\lambda)-S)\geq0$, where $f_3=\sum_{i,j,k=1}^n h_{ij}h_{jk} h_{ki}$, $S=\sum_{i,j=1}^n h_{ij}^2$, $h_{ij}$ denotes the components of the second fundamental form. This classification result generalizes the result of Huisken \cite{H1} and Colding and Minicozzi \cite{CM}.

\noindent For $\lambda>0$, in \cite{Hei}, Heilman proved that convex $n$-dimensional  $\lambda$-hypersurfaces are generalized cylinders if $\lambda>0$, which generalized the rigidity result of Colding and Minicozzi \cite{CM} to $\lambda$-hypersurfaces with $\lambda>0$.

\noindent The case $\lambda<0$ is much more complicated. In \cite{G1}, by using the explicit expressions of the derivatives of the principal curvatures at the non-umbilical points of the surface, Guang obtained that any strictly mean convex $2$-dimensional $\lambda$-hypersurfaces are convex if $\lambda\leq0$. Later, by using the maximum principle, Lee \cite{L1} showed that any compact embedded and mean convex $n$-dimensional $\lambda$-hypersurfaces are convex if $\lambda\leq0$. In \cite{C}, Chang constructed $1$-dimensional $\lambda$-curves in $\mathbb R^2$, which are not circles. These surprising examples show that $\lambda$-surfaces behave very differently when $\lambda<0$ compared to the case $\lambda>0$. New techniques must be introduced to study this phenomenon. We imagine these examples may carry meaningful information in probability theory (also see \cite{S}).

In \cite{S}, Sun developed the compactness theorem for $\lambda$-surface in $\mathbb R^{3}$ with uniform $\lambda$ and genus. As the application of the compactness theorem, he also showed a rigidity theorem for convex $\lambda$-surfaces. In the same paper, he \cite{S} proposed the problem that constructing a compact convex $\lambda$-surface which is not a sphere (see Question 4.0.4. on page 25, \cite{S}).

In this paper, motivated by \cite{D,DK,LW,S}, we construct nontrivial embedded $\lambda$-hypersurfaces which are diffeomorphic to $\mathbb S^{n}$
and they are not isometric to a standard sphere.  As the special case of our result, that is, $n=2$, we solve Sun's problem. In fact, we obtain the following theorem:
\begin{theorem}\label{0316thm1.1}
For  $n \ge 2$ and $ -\frac{2}{\sqrt{n+2}} < \lambda <0$,  there exists an  embedded convex $\lambda$-hypersurface $\Sigma^n \subset \R^{n+1}$ which is diffeomorphic to $\mathbb{S}^n$ and is not  isometric to a standard sphere.
\end{theorem}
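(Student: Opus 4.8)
The plan is to construct $\Sigma^n$ as a hypersurface of revolution about an axis through the origin, so that the problem reduces to solving the ODE for the generating (profile) curve. Write the profile curve in the upper half plane as a graph $y = u(x)$ (or better, parametrized by arc length $s \mapsto (x(s), y(s))$ with unit tangent angle $\theta(s)$), revolve it about the $x$-axis, and impose \eqref{0316eq1.1}. For such a rotational hypersurface the mean curvature is $H = \kappa + (n-1)\,\dfrac{\cos\theta}{y}$ where $\kappa = \theta'$ is the curvature of the profile curve and $y$ is the distance to the axis, while $\langle X,\nu\rangle = -(x\sin\theta - y\cos\theta)$ with an appropriate sign/orientation convention for the inward normal. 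So \eqref{0316eq1.1} becomes the planar system
\begin{equation*}
\frac{dx}{ds} = \cos\theta, \qquad \frac{dy}{ds} = \sin\theta, \qquad \frac{d\theta}{ds} = \lambda - (n-1)\frac{\cos\theta}{y} + x\sin\theta - y\cos\theta .
\end{equation*}
First I would set up this system carefully and identify the correct shooting configuration: launch the profile curve from a point $(a,0)$ on the axis with vertical tangent ($\theta = \pi/2$, so the surface closes up smoothly there as a smooth cap), integrate, and ask that it returns to the axis at some $(b,0)$ again with vertical tangent (another smooth cap), staying in $\{y>0\}$ and remaining a graph in between, which yields a compact embedded hypersurface diffeomorphic to $\mathbb{S}^n$.

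Next I would carry out the shooting argument. The free parameter is the starting coordinate $a$ (the point where the cap meets the axis); one studies the map $a \mapsto$ (terminal behavior of the trajectory). The key observation is that the sphere $x^2+y^2 = r^2$ with $r = \frac{-\lambda+\sqrt{\lambda^2+4n}}{2}$ is one explicit solution of this system, corresponding to a particular value $a = -r$ (or $+r$ depending on orientation); near that solution one understands the linearization. The strategy, following the cited works \cite{D,DK,LW,S}, is to show that for $a$ slightly different from the sphere value the trajectory still closes up — either by an intermediate-value / continuity argument on a suitable "return angle" functional, or by perturbing off the round sphere and using that the relevant derivative (a Jacobi-field computation along the sphere solution) is nondegenerate in the range $-\frac{2}{\sqrt{n+2}} < \lambda < 0$. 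The bound $\lambda > -\frac{2}{\sqrt{n+2}}$ should enter precisely here: it is the threshold below which the linearized operator along the round-sphere profile acquires an extra zero mode (or changes the Morse index), so that for $\lambda$ in this window one gets a genuine one-parameter family of closed, embedded, convex profiles bifurcating from (or perturbing near) the circle, none of which — except the distinguished one — is a round circle, hence the revolved hypersurface is not isometric to a standard sphere.

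Then I would verify the two qualitative properties that make the example count. Embeddedness: show the profile curve stays in $\{y>0\}$ between its two endpoints on the axis and does not self-intersect — typically by showing $y(s)>0$ on the open interval and that $x(s)$ is monotone, or by a reflection/barrier argument comparing with the round sphere. Convexity: show $\kappa = \theta' \ge 0$ along the whole profile and that the other principal curvatures $\frac{\cos\theta}{y}$ are $\ge 0$ (equivalently $|\theta| \le \pi/2$ throughout); by Lee's result \cite{L1} a compact embedded mean-convex $\lambda$-hypersurface with $\lambda\le 0$ is automatically convex, so it may suffice to establish mean convexity $H \ge 0$ along the constructed solution, which can be propagated by the maximum principle / ODE comparison from its value at the caps. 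Finally, to solve Sun's problem one just specializes to $n=2$, where $-\frac{2}{\sqrt{n+2}} = -1$, giving embedded convex non-spherical $\lambda$-surfaces for $-1 < \lambda < 0$.

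The main obstacle I anticipate is the shooting/closing step: proving that the perturbed profile curve actually returns to the axis with a vertical tangent (so the surface closes up $C^\infty$-smoothly rather than with a corner or with the curve spiraling), and doing so on the entire stated $\lambda$-interval. This requires a delicate analysis of the ODE system — a priori bounds keeping the trajectory in a compact region of $\{y>0\}$, a monotonicity or energy-type quantity controlling $\theta$, and an exact identification of the degeneracy threshold $\lambda = -\frac{2}{\sqrt{n+2}}$ via the Jacobi equation along the round-sphere solution. Controlling embeddedness and convexity simultaneously with closure, rather than one at a time, is the subtle part; the convexity is then cheap given \cite{L1} once mean-convexity and compact embeddedness are in hand.
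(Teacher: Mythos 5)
Your reduction to the rotational ODE system and the general idea of a shooting argument from the axis match the paper, but the core closing mechanism in your proposal is not the one that works, and the step you leave vague is precisely where all the work lies. The paper does \emph{not} close the curve by perturbing off the round sphere alone. It runs a two-sided continuity argument between \emph{two} explicit solutions: the hyperplane $f\equiv-\lambda$ of (\ref{0316eq2.7}) and the round sphere. For each starting point $x\in(-\lambda,\tfrac{-\lambda+\sqrt{\lambda^2+4n}}{2})$ on the axis, Lemma \ref{0316lem2.1} guarantees the profile curve, written as a concave decreasing graph $f(r)$ over the $r$-axis, blows up ($f'\to-\infty$) at a finite point $B_x$; at that point the tangent is automatically parallel to the $x$-axis, so the only scalar condition needed to close the curve is that $B_x$ lie \emph{on} the $r$-axis, after which a reflection symmetry of (\ref{0316eq2.5}) produces the full closed profile. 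The paper then shows $B_x$ lies in the first quadrant for $x$ slightly above $-\lambda$ (Proposition \ref{prop3.1}, which rests on the barrier estimates of Lemmas \ref{lem3.1}--\ref{lem3.3} and is the technical heart of the paper) and in the second quadrant for $x$ slightly below the sphere value (Lemma \ref{0317lem3.4}), and concludes by the intermediate value theorem. Your version aims at a codimension-two target (return to the $x$-axis \emph{and} vertical tangent) with a one-parameter family, and a pure perturbation/nondegeneracy argument off the sphere will generically fail to close the curve: the linearization only tells you on which side of the $r$-axis the perturbed curve crosses, which is useful solely as one endpoint of an IVT whose other endpoint (the hyperplane) is absent from your plan. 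Without the plane-side estimates and the symmetry reduction, the argument does not go through.

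Two smaller points. The threshold $\lambda>-\tfrac{2}{\sqrt{n+2}}$ does enter through the linearization at the sphere, as you guessed, but not as a zero-mode or Morse-index threshold: it is exactly the condition $2n+2-A(\lambda)>0$ under which the twice-differentiated Legendre-type equation (\ref{0315eq3.9}) for the linearized solution obeys a maximum principle, which is what forces the signs $w(\pi/2)<0$ and $w'(\pi/2)<0$ and hence places $B_x$ in the second quadrant. And convexity is obtained directly, not via Lee's theorem: Lemma \ref{0316lem2.1} gives $f''<0$ and $f'\le 0$, so by (\ref{0426eq2.3}) both families of principal curvatures are positive along the constructed profile (note also that on the relevant arc $\theta$ runs from $\pi/2$ to $\pi$, so your criterion ``$|\theta|\le\pi/2$'' has the wrong sign convention). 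Your route through mean convexity plus Lee's result could in principle substitute for this last step, but establishing mean convexity is no easier here than reading off full convexity from the concavity of $f$.
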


\begin{remark}
It is well-known that for compact embedded hypersurfaces with constant mean curvature in $\R^{n+1}$,
Alexandrov theorem holds, that is, a compact embedded hypersurface with constant mean curvature in $\R^{n+1}$ is isometric to a round sphere.
But for $\lambda$-hypersurfaces, one can not expect to have Alexandrov type theorem for $\lambda$-hypersurfaces according to  the above theorem 1.1.
\end{remark}

\begin{remark}

For self-shrinkers, there is a well-known conjecture asserts that the round sphere  should
be the only embedded self-shrinker which is diffeomorphic to a sphere. Brendle \cite{B} proved the above conjecture for $2$-dimension self-shrinker. For the higher dimensional  self-shrinker, the conjecture is still open. But for $\lambda$-hypersurfaces, we can construct compact embedded $\lambda$-hypersurface which is diffeomorphic to a sphere and is not isometric to a round sphere.
\end{remark}

\begin{remark}
For $-\frac{2}{\sqrt{3}}<\lambda<0$, Chang \cite{C} proved that there exists a compact embedded $\lambda$-curve with $2$-symmetry in $\R^{2}$, which is not a circle.
\end{remark}

%
%\begin{remark}
%For $n=2$ and $\lambda<0$, Sun \cite{S} proposed the problem that constructing a compact convex $\lambda$-surface which is not a sphere (see Question 4.0.4. on page 25, \cite{S}). In this paper, we solve his problem.
%\end{remark}

\section{Preliminaries}\label{prel}
Let $SO(n)$ denote the special orthogonal group and act on $\R^{n+1} = \left\lbrace (x,y): x\in\R, y\in\R^n\right\rbrace $ in the usual way, then we can identify the space of orbits $\R^{n+1}/SO(n)$ with the half plane $\mathbb{H} = \left\lbrace (x,r)\in\R^2: x\in\R, r \ge 0\right\rbrace$ under the projection (see \cite{R})
\[
 \Pi(x,y) = (x,|y|) = (x,r).
\]
If a hypersurface $\Sigma$ is invariant under the action $SO(n)$, then the projection $\Pi(\Sigma)$ will give us a profile curve in the half plane, which can be parametrized by Euclidean arc length and write as $\gamma(s) = (x(s), r(s))$. Conversely, if we have a curve $\gamma(s) = (x(s), r(s)),\ \ s \in (a, b)$ parametrized by Euclidean arc length in the half plane, then we can reconstruct the hypersurface by
\begin{gather}
    	 { X : (a, b) \times S^{n-1}(1) \hookrightarrow \R^{n+1} }, \notag \\
    	 (s,\alpha) \mapsto (x(s), r(s)\alpha). \label{0316eq2.1}
\end{gather}
Let
\begin{equation}\label{0316eq2.2}
	\nu = (-\dot{r}, \dot{x}\,\alpha),
\end{equation}
where the dot denotes taking derivative with respect to arc length $s$. A direct calculation shows that $\nu$ is an inward unit normal vector for the hypersurface. Then we can calculate that the principal curvatures of hypersurface (see \cite{CW1,DD}): % has $n - 1$ principal curvatures equals to $ - \frac{\dot{x}}{r}$ and 1 principal curvature equals to $\dot{x}\,\ddot{r}-\ddot{x}\,\dot{r}$(see \cite{CW1,DD}).
\begin{equation}\label{0426eq2.3}
\aligned
\kappa_i & = - \frac{\dot{x}}{r}, \ \ \ \ i=1,\,2,\,\dots,\,n-1, \\
\kappa_{n} &= \dot{x}\,\ddot{r}-\ddot{x}\,\dot{r}.
\endaligned
\end{equation}
Hence the mean curvature vector equals to
\begin{equation}\label{0316eq2.3}
     \overrightarrow{H}=H\nu,\ \ \ \ {\mbox here}\ \  H =\sum_{i=1}^n\kappa_i=  \dot{x}\,\ddot{r}-\ddot{x}\,\dot{r} - (n-1)\frac{\dot{x}}{r},
\end{equation}
and then, by (\ref{0316eq2.1}), (\ref{0316eq2.2}) and (\ref{0316eq2.3}), equation (\ref{0316eq1.1}) reduces to (see also \cite{CW1,DK,R})
\begin{equation}\label{0316eq2.4}
     \dot{x}\,\ddot{r} - \ddot{x}\,\dot{r} = (\frac{n-1}{r} - r )\dot{x} +   x\,\dot{r} + \lambda,
\end{equation}
where $(\dot{x})^2 + (\dot{r})^2 = 1$. Let $\theta(s)$ denote the angles between the tangent vectors of the profile curve and $x$-axis, then (\ref{0316eq2.4}) can be written as the following system of differential equation:
\begin{equation}\label{0316eq2.5}
 \left\lbrace
    \begin{aligned}
	\dot{x} &= \cos\theta,\\
	\dot{r} &= \sin\theta, \\
	\dot{\theta} &=  (\frac{n-1}{r} - r )\cos\theta +   x\,\sin\theta + \lambda.
	\end{aligned}
\right.
\end{equation}
Let $P$ denote the projection from $\mathbb{H} \times \R$ to $\mathbb{H}$. Obviously, if $\gamma(s)$ is a solution of (\ref{0316eq2.5}), then the curve $P(\gamma(s))$ will generate a $\lambda$-hypersurface by (\ref{0316eq2.1}). If we can, by this way, find a curve that starts and ends on the $x$-axis and is perpendicular to the  $x$-axis at both ends, then  we obtain an embedded  $\lambda$-hypersurface, which is diffeomorphic to a sphere. Hence,  Theorem \ref{0316thm1.1} will be proved. By some symmetry of (\ref{0316eq2.5}), a curve that starts perpendicularly on the $x$-axis and ends perpendicularly on the $r$-axis will also solve the problem. This paper's main goal is to find the latter.
\\
Letting $(x_0, r_0, \theta_0)$ be a point in $\{(x, r, \theta): x\in\R, r > 0, \theta \in \R \}$, by an existence and uniqueness theorem of the solutions for first order ordinary differential equations, there is a unique solution $\Gamma(x_0, r_0, \theta_0)(s)$ to (\ref{0316eq2.5}) satisfying initial conditions $\Gamma(x_0, r_0, \theta_0)(0) = (x_0, r_0, \theta_0) $. Moreover, the solution depends smoothly on the initial conditions. Note that (\ref{0316eq2.5}) has a singularity at $r = 0$ due to the term $\frac{1}{r}$. But there is also an existence and uniqueness theorem for (\ref{0316eq2.5})  with initial conditions at singularity and $\theta_0 = \pi/2$. And the previously mentioned smooth dependence  can also extend to singularity smoothly (see \cite{D}).

When the profile curve can be written in the form $(x, u(x))$, by (\ref{0316eq2.5}), the function $u(x)$ satisfies the differential equation
\begin{equation}\label{0316eq2.6}
	\frac{u''}{1 + (u')^2} = x\,u' - u + \frac{n-1}{u} + \lambda\sqrt{1+(u')^2}.
\end{equation}
When the profile curve can be written in the form $(f(r), r)$, by (\ref{0316eq2.5}), the function $f(r)$ satisfies the differential equation
\begin{equation}\label{0316eq2.7}
\frac{f''}{1 + (f')^2} =  ( r - \frac{n-1}{r}  )f' -f - \lambda\sqrt{1+(f')^2}.
\end{equation}
Next we consider equations (\ref{0316eq2.5}) in polar coordinates,
when the profile curve can be written in the form $\rho = \rho(\phi)$, where $\rho = \sqrt{x^2\,+\,r^2 }$ and $\phi\,=\,\arctan(r/x)$, the function $\rho(\phi)$ satisfies the differential equation
\begin{equation}\label{0316eq2.8}
\rho''\,=\,\frac{1}{\rho}\left\lbrace \rho'^2\,+\, (\rho^2\,+\,\rho'^2)\left[n\,-\,\rho^2\,-\,(n\,-\,1)\frac{\rho'}{\rho}\cot\phi\,-\,\lambda\,\sqrt{\rho^2\,+\,\rho'^2} \right] \right\rbrace.
\end{equation}
Note that (\ref{0316eq2.7}) has a solution $f = -\lambda$, which corresponds to hyperplane, (\ref{0316eq2.8}) has a solution $\rho\,=\,\frac{-\lambda\,+\,\sqrt{\lambda^2\,+\,4\,n}}{2}$, which corresponds to the round sphere.
\\
We conclude this section with a lemma about the solutions of (\ref{0316eq2.7}), this lemma can be found in {\cite{LW}}.
\begin{lemma}[\cite{LW}]\label{0316lem2.1}
	Let $f$ be a solution of (\ref{0316eq2.7}) with $f(0) > -\lambda$ and $f'(0) = 0$. Then we have
	 $f'' <0$ and there exists  a point $r_* < \infty$ so that $\lim_{r \rightarrow r_*}f'(r) = -\infty $ and $\lim_{r \rightarrow r_*}f(r) > -\infty $, i.e., $f$ blows-up at $r_*$.
\end{lemma}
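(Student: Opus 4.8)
The plan is to isolate the second derivative in \eqref{0316eq2.7},
\[
 f''=\bigl(1+(f')^{2}\bigr)\,G,\qquad
 G:=\Bigl(r-\tfrac{n-1}{r}\Bigr)f'-f-\lambda\sqrt{1+(f')^{2}},
\]
and to establish the two assertions — concavity, and blow‑up at a finite $r_*$ with $f$ bounded below — in turn. By the singular initial value theory recalled above the solution is smooth up to $r=0$; letting $r\to 0^{+}$ in \eqref{0316eq2.7} and using $f'(0)=0$, $f'(r)\sim f''(0)\,r$, gives $n f''(0)=-(f(0)+\lambda)$, so $f''(0)<0$ because $f(0)>-\lambda$, hence $f''<0$ near $0$. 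If there were a first zero $r_{1}$ of $f''$ in $(0,r_{*})$, then $G(r_{1})=0$, and differentiating the displayed identity together with
\[
 G'=\tfrac{n-1}{r^{2}}f'+f''\Bigl[(r-\tfrac{n-1}{r})-\tfrac{\lambda f'}{\sqrt{1+(f')^{2}}}\Bigr]
\]
yields $f'''(r_{1})=\bigl(1+(f'(r_{1}))^{2}\bigr)\tfrac{n-1}{r_{1}^{2}}f'(r_{1})<0$, since $f'(r_{1})<f'(0)=0$; this contradicts $f''$ increasing through $0$ at $r_{1}$. Therefore $f''<0$ on the whole maximal interval $[0,r_{*})$, and consequently $f'<0$ and $f$ is strictly decreasing for $r>0$, while $w:=-1/f'$ is positive, strictly decreasing, with $w(0^{+})=+\infty$.

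Next I claim $r_{*}<\infty$. From the formula for $G'$ and $|\lambda f'|/\sqrt{1+(f')^{2}}<|\lambda|$, the bracket in $G'$ is positive once $r-\tfrac{n-1}{r}>|\lambda|$, i.e. for $r$ larger than the $\lambda$-cylinder radius $r_{c}:=\tfrac{-\lambda+\sqrt{\lambda^{2}+4(n-1)}}{2}$; hence $G'<0$ there. If $r_{*}=\infty$, then $G$ is eventually decreasing and always negative, so $G\to L\in[-\infty,0)$; but $w'=f''/(f')^{2}=G\bigl(1+(f')^{-2}\bigr)\le G$, so $w'\le-\delta<0$ for $r$ large, forcing $w\to-\infty$ — impossible since $w>0$. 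So $r_{*}<\infty$. On $[0,r_{*})$ equation \eqref{0316eq2.7} is a regular ODE away from $r=0$, so the maximal solution leaves every compact subset of phase space as $r\to r_{*}$; a bounded $f'$ near $r_{*}$ would force a bounded $f$, so monotonicity of $f'$ gives $\lim_{r\to r_{*}}f'(r)=-\infty$.

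It remains to prove $\lim_{r\to r_{*}}f(r)>-\infty$. Parametrize the profile curve by arclength, $(x(s),r(s),\theta(s))$ solving \eqref{0316eq2.5} with $(x(0),r(0),\theta(0))=(f(0),0,\tfrac{\pi}{2})$ and $\cot\theta=f'$; then $\theta$ is increasing and $\theta\to\pi$ as $r\to r_{*}$. It is enough to show that $\theta$ reaches $\pi$ at finite arclength $s_{*}$, for then $|f(r)-f(0)|=\bigl|\int_{0}^{s(r)}\cos\theta\bigr|\le s_{*}<\infty$. Suppose instead $s_{*}=\infty$. Then $r(s)\to r_{*}<\infty$, $\theta(s)\uparrow\pi$, and since $\cos\theta\to-1$ we get $x(s)\to-\infty$, so the curve is asymptotic to the line $\{r=r_{*}\}$. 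Analysing \eqref{0316eq2.5} as $s\to\infty$ (using $\dot\theta>0$, $\dot r=\sin\theta>0$, $\int_{0}^{\infty}\sin\theta<\infty$) one finds: $r_{*}<r_{c}$ would make $\dot\theta$ eventually negative; $r_{*}>r_{c}$ would make $r(s)\to\infty$; and $r_{*}=r_{c}$ is excluded by linearising \eqref{0316eq2.5} about the cylinder orbit $(a_{0}-s,\,r_{c},\,\pi)$: writing $\eta=r-r_{c}<0$, $\zeta=\theta-\pi<0$ and using $\tfrac{n-1}{r_{c}}-r_{c}=\lambda$ (so the constant term cancels), one gets $\dot\zeta=\bigl(1+\tfrac{n-1}{r_{c}^{2}}\bigr)\eta+(-x)\zeta$ up to higher order terms, which is $<0$ as $s\to\infty$, contradicting $\dot\zeta=\dot\theta>0$. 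Hence $s_{*}<\infty$, $f$ stays bounded below, and the lemma follows.

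The main obstacle is this last point: the concavity is a short maximum‑principle argument, and finiteness of $r_{*}$ follows from the single monotone quantity $w=-1/f'$, but ruling out that the profile curve escapes to infinity — in particular along the $\lambda$-cylinder — requires the above phase‑plane analysis of \eqref{0316eq2.5} near that equilibrium.
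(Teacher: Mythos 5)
The paper does not actually prove this lemma --- it imports it verbatim from \cite{LW} --- so there is no internal argument to compare yours against, and your proposal has to stand on its own. It does: the structure (concavity by a first-zero/maximum-principle argument, finiteness of $r_*$ from a monotone quantity, boundedness of $f$ by a phase-plane analysis of the arclength system (\ref{0316eq2.5})) is sound, and each step checks out. In particular $nf''(0)=-(f(0)+\lambda)<0$ at the singular initial point, $f'''(r_1)=\bigl(1+(f'(r_1))^2\bigr)\tfrac{n-1}{r_1^2}f'(r_1)<0$ at a putative first zero of $f''$, and $w'\le G\le -\delta$ for $w=-1/f'>0$ past the cylinder radius are all correct, as is the reduction of $\lim_{r\to r_*}f(r)>-\infty$ to excluding an infinite-length tail asymptotic to the line $r=r_*$.

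Two spots in the final trichotomy are asserted more tersely than they deserve. For $r_*>r_c$, ``$r(s)\to\infty$'' is not immediate: from $\dot\theta=c+o(1)+x\sin\theta$ with $c>0$ and $\int_0^\infty\dot\theta\,{\rm d}s\le\pi/2$ you get $\int_0^S(-x)\sin\theta\,{\rm d}s\ge \tfrac{c}{2}S$ for large $S$, and since $-x(s)\le C+s$ an Abel-summation (integration by parts) step is needed to conclude $\int_0^S\sin\theta\,{\rm d}s\gtrsim c\log S$, contradicting $\int_0^\infty\sin\theta\,{\rm d}s=r_*<\infty$; this half-page computation is hidden in your one clause. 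For $r_*=r_c$, the ``higher order terms'' $O(\zeta^2)$ and $|x|\,O(|\zeta|^3)$ must be checked to be $o(|\eta|+|x\zeta|)$, which does hold because $\zeta\to0$ while $|x|\to\infty$, but should be said. With those two points written out, the proof is complete and self-contained --- which is more than the paper itself provides for this statement.
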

%\begin{lemma}[\cite{LW}]\label{0316lem2.2}
%	Let $f$ be a solution of (\ref{0316eq2.7}) with $f(0) = b > -\lambda$ and $f'(0) = 0$,  then there must be a point $r_* < \infty$ so that $\lim_{r \rightarrow r_*}f'(r) = -\infty $ i.e., $f$ blows up at $r_*$.
%\end{lemma}
%\begin{lemma}[\cite{LW}]\label{0316lem2.3}
%	Let $f$ be a solution of (\ref{0316eq2.7}) with $f(0) = b > -\lambda$ and $f'(0) = 0$,  then  $\lim_{r \rightarrow r_*}f(r) > -\infty $.
%\end{lemma}

\section{Behavior near the known solutions}
In order to solve the problem,  we need to study the behavior near two known solutions first, and  use continuity argument to find the curve that we want to have.
\subsection{Behavior near the plane}
Let $f_{\epsilon}(r)$ be the solution of (\ref{0316eq2.7}) with $f_{\epsilon}(0)= -\lambda + \epsilon$ and $f_{\epsilon}'(0) = 0$ where $'$ denotes $\frac{{\rm d}}{{\rm d}r}$. For $\epsilon > 0$, by Lemma \ref{0316lem2.1}, let $r_*^\epsilon$ denote the point where $f_\epsilon$ blows-up, and let $x_*^\epsilon = f_\epsilon(r_*^\epsilon)$. Let $h_\epsilon(r) = f_\epsilon(r) + \lambda $,  we may write $h_\epsilon(r)$ to $h(r)$, $f_\epsilon(r)$ to $f(r)$, $r_*^\epsilon$ to $r_*$ and $x_*^\epsilon$ to $x_*$ when there is no ambiguity. We introduce the following proposition which gives a description of the behavior near the plane $f_0 = -\lambda$.

\begin{proposition}\label{prop3.1}
	For any fixed $n \ge 2$, $-\min\left\lbrace \frac{23n+4}{28\sqrt{n}}, \frac{25n - 6}{30\sqrt{n}}\right\rbrace  \le \lambda < 0$, there exists $\bar{\epsilon}>0$ so that
	\[
	r_*^\epsilon \ge \sqrt{\log \frac{1}{\sqrt{\pi}\epsilon}},\ \ \ \
	-\frac{30n+4}{\sqrt{\log \frac{1}{\sqrt{\pi}\epsilon}}} - \lambda \le x_*^\epsilon <  -\lambda
	\]
for $\epsilon \in (0, \bar{\epsilon}]$.
\end{proposition}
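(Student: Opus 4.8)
\textit{Strategy.} The plan is to quantify how far $f_\epsilon$ has drifted from the plane $f_0=-\lambda$. Set $h=h_\epsilon=f_\epsilon+\lambda$. Then $h(0)=\epsilon$, $h'(0)=0$, and differentiating (\ref{0316eq2.7}) at the singular point $r=0$ (where $f_\epsilon$ is smooth and even in $r$) gives $h''(0)=-\epsilon/n$, with $h'(r)\sim-\tfrac{\epsilon}{n}\,r$ near $0$. By Lemma~\ref{0316lem2.1}, $h$ is strictly decreasing on $(0,r_*^\epsilon)$, $h''<0$, and $h'\to-\infty$ while $h$ stays finite as $r\uparrow r_*^\epsilon$. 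I would rewrite (\ref{0316eq2.7}) in terms of $h$ and multiply by the Gaussian integrating factor $\psi(r):=r^{n-1}e^{-r^2/2}$ (so $\psi'/\psi=\tfrac{n-1}{r}-r$), which turns it into
\[
(\psi h')'=\psi\Big[\Big(r-\tfrac{n-1}{r}\Big)(h')^3-(1+(h')^2)h-\lambda(1+(h')^2)\big(\sqrt{1+(h')^2}-1\big)\Big]=:\psi\,G;
\]
since $\psi(0)h'(0)=0$ this integrates to the representation
\[
h'(r)=\frac{1}{\psi(r)}\int_0^r\psi(t)\,G(t)\,dt .
\]
In the regime where $|h'|$ is small, $G$ is dominated by $-h$, the weight $\psi$ damps the integral, and $\psi(r)^{-1}=r^{-(n-1)}e^{r^2/2}$ grows only like $e^{r^2/2}$; these three facts drive all the estimates.

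\textit{Step 1: $r_*^\epsilon\ge L$ with $L:=\sqrt{\log\frac{1}{\sqrt\pi\,\epsilon}}$.} I would run a continuity argument on the largest interval $[0,R)$ on which $h,h'$ obey Gaussian-type barriers slightly wider than $e^{r^2/2}$ — say $|h'(r)|\le\epsilon^{3/4}\,r^{-(n-1)}e^{(1/2+\delta)r^2}$ for a small fixed $\delta>0$ and $r$ beyond a fixed radius, with crude $O(\epsilon)$ bounds near $r=0$, and $|h|$ controlled by integrating $h'$. On $[0,R)$ the nonlinear and $\lambda$-dependent terms in $G$ are of strictly higher order in the barriers, so the integral representation feeds the barriers back into themselves once $r\le L$ and $\epsilon\le\bar\epsilon$, because $\epsilon^{3/4}e^{(1/2+\delta)L^2}=(\sqrt\pi)^{-3/4}e^{-(1/4-\delta)L^2}\to0$ as $\epsilon\to0$. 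Hence $R\ge L$, so $h'$ (and therefore $f_\epsilon'$) is finite on $[0,L)$ and $r_*^\epsilon\ge L$; moreover the barriers persist a little past $L$, up to $r_1:=\sup\{r:|h'|<1\}$, which thus satisfies $r_1>L$, and they give $|h|,|h'|$ uniformly small on $[0,r_1]$.

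\textit{Step 2: the bounds on $x_*^\epsilon=h_\epsilon(r_*^\epsilon)-\lambda$.} For $x_*^\epsilon<-\lambda$, i.e.\ $h_\epsilon(r_*^\epsilon)<0$: since $h$ is strictly decreasing and $r_*^\epsilon\ge L$, it suffices that the drop $\int_0^{L}(-h')\,dr$ exceeds $\epsilon$. Using
\[
-G=\big(r-\tfrac{n-1}{r}\big)|h'|^{3}+(1+(h')^{2})\Big[\,h-|\lambda|\big(\sqrt{1+(h')^{2}}-1\big)\Big]\qquad(r\ge\sqrt{n-1}),
\]
one notes that on a fixed small interval $[0,r_0]$ the bracket is $\approx h\approx\epsilon$, so $\int_0^{r}\psi(-G)\ge c_n\epsilon$ for $r\ge r_0$; this lower bound persists for all $r\le L$ because the hypothesis $|\lambda|\le\min\{\tfrac{23n+4}{28\sqrt n},\tfrac{25n-6}{30\sqrt n}\}$ keeps the negative excursions of $-G$ (where $|\lambda|(\sqrt{1+(h')^{2}}-1)$ might exceed $h$) too small to cancel it. Hence $-h'(r)=\psi(r)^{-1}\int_0^r\psi(-G)\ge c_n\epsilon\,r^{-(n-1)}e^{r^2/2}$ on $[r_0,L]$, and $\int_0^L(-h')\ge c_n\epsilon\int_{r_0}^L r^{-(n-1)}e^{r^2/2}\,dr>\epsilon$ once $\epsilon\le\bar\epsilon$, since the last integral tends to $\infty$. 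For the lower bound $x_*^\epsilon\ge-\lambda-\tfrac{30n+4}{L}$, i.e.\ $\int_0^{r_*^\epsilon}(-h')\le\epsilon+\tfrac{30n+4}{L}$: split at $r_1$. On $[0,r_1]$ the Gaussian-weighted bounds of Step~1 give a drop of size $O(1/L)$. On the fast stretch $[r_1,r_*^\epsilon]$ I would pass to (\ref{0316eq2.5}): there $\theta$ rises from $\tfrac{3\pi}{4}$ (forced by $h'(r_1)=-1$) to $\pi$, and since $r\ge L$ and $|h|$ is small one has $\dot\theta=(\tfrac{n-1}{r}-r)\cos\theta+x\sin\theta+\lambda\ge\tfrac{1}{\sqrt2}\big(r-\tfrac{n-1}{r}\big)-|\lambda|\to\infty$, so the remaining angle $\tfrac{\pi}{4}$ is swept in arclength $O(1/L)$; as $\dot x=\cos\theta\in[-1,0]$ there, $h$ drops by at most $O(1/L)$ on this stretch. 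Adding the two contributions and tracking the constants yields the stated bound.

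\textit{Main obstacle.} The skeleton is clean; the substance is the bookkeeping. The principal difficulties are: (i) making the Step~1 barriers genuinely close under the integral representation — the naive $e^{r^2/2}$-barrier picks up a logarithmic loss at each pass, which is why one needs the slightly wider factor $e^{(1/2+\delta)r^2}$ together with a fixed-radius cutoff; (ii) keeping every $\lambda$-dependent quantity below an explicit threshold, which is the sole reason the two competing bounds $\tfrac{23n+4}{28\sqrt n}$, $\tfrac{25n-6}{30\sqrt n}$ appear (they control the persistence of the lower bound on $\int\psi(-G)$ in Step~2 and the fast-phase estimate, respectively); and — the part I expect to be hardest — (iii) the sharp analysis of the fast phase $[r_1,r_*^\epsilon]$ near the blow-up, needed to pin down the constant $30n+4$, together with the matching lower bound on $-h'$ that forces $x_*^\epsilon<-\lambda$. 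The singular endpoint $r=0$ is handled throughout by the $r$-evenness of $f_\epsilon$, which renders the $\tfrac{n-1}{r}$ terms harmless there.
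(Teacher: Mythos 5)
Your overall architecture (a Gaussian integrating factor for the lower bound on $r_*^\epsilon$; a total-drop estimate for $x_*^\epsilon<-\lambda$; a split at the point where $h'=-1$ followed by a fast-phase analysis for the lower bound on $x_*^\epsilon$) runs parallel to the paper's, and two pieces are sound. The integrating-factor computation behind your Step 1 is essentially the paper's Lemma \ref{lem3.1}, except that the paper needs only the \emph{one-sided} bound $h\le\epsilon$ (from monotonicity) to integrate the differential inequality for $e^{-r^2}h'$ directly, so no bootstrap is required; and your fast-phase estimate via the angle equation, $\dot\theta\ge\tfrac{1}{\sqrt2}\bigl(r-\tfrac{n-1}{r}\bigr)-|\lambda|$, is a legitimate alternative to the paper's integration of $h''<\tfrac14 r (h')^3$ and yields a comparable constant.

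The genuine gaps sit exactly where you need quantitative two-sided control of $h'$. First, your Step 2 claim that $\int_0^r\psi(-G)\ge c_n\epsilon$ persists up to $r=L$ does not follow from the Step 1 barriers: with $|h'|\le\epsilon^{3/4}r^{-(n-1)}e^{(1/2+\delta)r^2}$ one only gets $\int_{r_0}^{L}\psi\,|\lambda|\bigl(1+(h')^2\bigr)\bigl(\sqrt{1+(h')^2}-1\bigr)\,dr\lesssim|\lambda|\,\epsilon^{1-2\delta}L^{-n}$, and $\epsilon^{-2\delta}L^{-n}\to\infty$ as $\epsilon\to0$, so this negative contribution swamps $c_n\epsilon$ near $r=L$ --- precisely where you need the sign. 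Repairing this requires the sharp asymptotic $|h'|\sim C\epsilon\,r^{-(n-1)}e^{r^2/2}$, and proving that is most of the work; the closing of the barriers themselves is likewise asserted rather than proved. Second, the route to the explicit constant $30n+4$ is left entirely to ``tracking the constants'': integrating your barrier over $[0,r_1]$ gives a drop of order $1/L$ only up to unspecified $\delta$-dependent constants, with no mechanism producing $30n$. The paper avoids both difficulties with devices you do not use: the sign change $h(r_0)=0$ at some $r_0\in[\sqrt n,\sqrt{2n}]$ comes from a \emph{soft} linearization at the plane (Lemma \ref{lem3.2}, a Kummer-equation maximum-principle argument needing no asymptotics of $h'$ at all), and the bound $h(r)>\tfrac{30n}{r}h'(r)$ on $[r_0,r_*)$ comes from the one-line comparison function $\Phi=\tfrac{1}{30n}rh-h'$ (Lemma \ref{lem3.3}). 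These are also where the two thresholds on $\lambda$ actually enter --- $\tfrac{25n-6}{30\sqrt n}$ in the $\Phi$-barrier and $\tfrac{23n+4}{28\sqrt n}$ in the blow-up estimate $h''<\tfrac14 r(h')^3$ --- not in the roles you assign them.
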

To prove the proposition, we need several information  on $f_{\epsilon}(r)$ under small $\epsilon$.

\begin{lemma}\label{lem3.1}
	If $\lambda \le 0$, $0 < \epsilon \le \frac{1}{\sqrt{\pi}}$, then $r_* > \sqrt{\log \frac{1}{\sqrt{\pi}\epsilon}}$.
\end{lemma}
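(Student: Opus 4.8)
The plan is to pass from the second‑order equation (\ref{0316eq2.7}) to a first‑order equation for $\sigma:=f'/\sqrt{1+(f')^2}$, which in the notation of (\ref{0316eq2.5}) is $\cos\theta$, the cosine of the tangent angle of the profile curve. By Lemma \ref{0316lem2.1} the solution blows up at $r_*$ in the sense that $f'(r)\to-\infty$ while $f(r)$ stays bounded, so $\sigma(r)\to-1$ and $f(r)/\sqrt{1+f'(r)^2}\to 0$ as $r\uparrow r_*$; these are the only features of the blow‑up I will use. The argument then rests on two independent facts: (i) an a priori bound $r_*\ge 1$, and (ii) a Gaussian‑weighted integral identity forcing $r_*$ to grow like $\sqrt{\log(1/\epsilon)}$ as $\epsilon\to 0$. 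Putting them together yields the stated inequality.

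For (i) I would rewrite (\ref{0316eq2.7}) as a formula for the $n$‑th principal curvature in (\ref{0426eq2.3}):
\[
\kappa_n=-\frac{f''}{\bigl(1+(f')^2\bigr)^{3/2}}=\Bigl(\tfrac{n-1}{r}-r\Bigr)\frac{f'}{\sqrt{1+(f')^2}}+\frac{f}{\sqrt{1+(f')^2}}+\lambda .
\]
On $(0,r_*)$ we have $f''<0$ by Lemma \ref{0316lem2.1}, hence $\kappa_n>0$; letting $r\uparrow r_*$, the right‑hand side has the finite limit $r_*+\lambda-\tfrac{n-1}{r_*}$, which must therefore be $\ge 0$. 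So $r_*^2+\lambda r_*-(n-1)\ge 0$, and since $r_*>0$ and $\lambda\le 0$ this forces $r_*\ge\tfrac{-\lambda+\sqrt{\lambda^2+4(n-1)}}{2}\ge\sqrt{n-1}\ge 1$ for $n\ge 2$.

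For (ii), dividing (\ref{0316eq2.7}) by $\sqrt{1+(f')^2}$ gives $\sigma'=\bigl(r-\tfrac{n-1}{r}\bigr)\sigma-\tfrac{f}{\sqrt{1+(f')^2}}-\lambda$; multiplying by the integrating factor $\mu(r):=r^{n-1}e^{-r^2/2}$ (for which $\mu'/\mu=\tfrac{n-1}{r}-r$) collapses this to
\[
\bigl(r^{n-1}e^{-r^2/2}\,\sigma\bigr)'=-\,r^{n-1}e^{-r^2/2}\Bigl(\tfrac{f}{\sqrt{1+(f')^2}}+\lambda\Bigr).
\]
Integrating over $(0,r_*)$, the endpoint term at $0$ vanishes since $n\ge 2$ and $\sigma(0)=0$, while $\sigma(r_*)=-1$, so
\[
r_*^{\,n-1}e^{-r_*^2/2}=\int_0^{r_*}t^{\,n-1}e^{-t^2/2}\Bigl(\tfrac{f(t)}{\sqrt{1+f'(t)^2}}+\lambda\Bigr)\,dt .
\]
Because $f'(0)=0$ and $f''<0$, $f$ decreases from $f(0)=-\lambda+\epsilon$, and checking the cases $f(t)\ge 0$ and $f(t)<0$ shows $\tfrac{f(t)}{\sqrt{1+f'(t)^2}}+\lambda\le\epsilon$ throughout; using moreover $t^{\,n-1}\le t\,r_*^{\,n-2}$ on $[0,r_*]$ and cancelling $r_*^{\,n-2}$, one is left with $r_*\,e^{-r_*^2/2}\le\epsilon\int_0^{r_*}te^{-t^2/2}\,dt=\epsilon\bigl(1-e^{-r_*^2/2}\bigr)<\epsilon$.

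Finally, (i) and (ii) give $e^{-r_*^2/2}<\epsilon/r_*\le\epsilon$, hence $r_*^2>2\log(1/\epsilon)$; and since $0<\epsilon\le1/\sqrt\pi$ we have $2\log(1/\epsilon)\ge\log(1/\epsilon)+\tfrac12\log\pi>\log\tfrac{1}{\sqrt\pi\,\epsilon}\ge 0$, so $r_*>\sqrt{\log\tfrac{1}{\sqrt\pi\,\epsilon}}$. I expect the main obstacle to be fact (i): it cannot be dropped, since $r_*e^{-r_*^2/2}<\epsilon$ is on its own also satisfied by a spurious small root $r_*\approx\epsilon$ on the increasing branch of $r\mapsto re^{-r^2/2}$, and it is precisely the turning‑point inequality $\kappa_n(r_*)\ge 0$ that excludes that branch; once the integrating factor $r^{n-1}e^{-r^2/2}$ and the identity above are in hand, the remaining steps are routine.
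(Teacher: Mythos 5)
Your proof is correct, but it takes a genuinely different route from the paper's. The paper never integrates up to the blow-up point: it picks the intermediate point $r'$ where $f'(r')=-1$, shows $\frac{{\rm d}}{{\rm d}r}\bigl(e^{-r^2}h'\bigr)\ge -2e^{-r^2}h$ on $(0,r')$ using $\tfrac{2}{1+h'^2}\ge 1$ there (which is exactly why it must stop at $r'$), bounds $h\le\epsilon$ by monotonicity, and integrates to get $e^{-(r')^2}\le\sqrt{\pi}\,\epsilon$, hence $r_*>r'\ge\sqrt{\log\tfrac{1}{\sqrt{\pi}\epsilon}}$. You instead work with $\sigma=f'/\sqrt{1+(f')^2}$, for which the integrating-factor step with $\mu=r^{n-1}e^{-r^2/2}$ is an exact identity valid all the way to $r_*$, and you compensate for the loss of the factor $r_*$ by a separate a priori bound $r_*\ge\sqrt{n-1}\ge 1$ coming from positivity of $\kappa_n=-f''/(1+(f')^2)^{3/2}$ at the turning point; all the individual steps check out (the limit $f/\sqrt{1+(f')^2}\to 0$ uses that $f$ stays bounded, which Lemma \ref{0316lem2.1} guarantees, and the final chain of logarithmic inequalities is valid precisely because $\epsilon\le 1/\sqrt{\pi}$). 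Your version is arguably cleaner (no inequality in the integrating-factor step, and you even get the stronger bound $r_*^2>2\log(1/\epsilon)$ plus the structural bound $r_*\ge\frac{-\lambda+\sqrt{\lambda^2+4(n-1)}}{2}$), and your closing remark correctly identifies why the lower bound on $r_*$ is indispensable to exclude the spurious branch of $r\mapsto re^{-r^2/2}$. One caveat worth flagging: the paper's argument actually establishes the estimate at the earlier point $r'$ where $f'=-1$, and it is this stronger form ($r'\ge\sqrt{\log\tfrac{1}{\sqrt{\pi}\epsilon}}$, not merely $r_*\ge\cdots$) that gets invoked in the proof of Proposition \ref{prop3.1}; your argument as written only controls $r_*$, so if you wanted to substitute it into the paper you would need to rerun your flux identity on $(0,r')$ (where $\sigma(r')=-1/\sqrt{2}$) or otherwise recover the bound at $r'$.
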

\begin{proof}
	
	%We adopt an argument from \cite{D}.
	Since $f'(0) = 0$, $f''(r) < 0$, and $\lim_{r \rightarrow r_*}f'(r) = -\infty$, there exists $r' \in (0, r_*)$ so that $f'(r') = -1$. Then $h'(r')$ also equals to $-1$. For $r \in (0, r')$, we have
	\[
	\aligned
	  \dfrac{{\rm d}}{{\rm d}r}(e^{-r^2}h'(r))
	  &= e^{-r^2}h''(r) - 2re^{-r^2}h'(r)\\
	  &\ge \dfrac{2}{1 + h'(r)^2} e^{-r^2}h''(r) - 2re^{-r^2}h'(r)\\
	  &= 2e^{-r^2}\left[ (r - \dfrac{n - 1}{r})h'(r) - h(r) + \lambda(1 - \sqrt{1 + h'(r)^2}) \right] - 2re^{-r^2}h'(r)\\
	  &\ge -2e^{-r^2}h(r),
	\endaligned
	\]
	where we have used that $\lambda \le 0$ in the last inequality. Integrating from $0 \ {\rm to}\  r'$,
	\[
	   -e^{-(r')^2} \ge -2\int_{0}^{r'}e^{-r^2}h(r){\rm d}r \ge -2\epsilon\int_{0}^{r'}e^{-r^2}{\rm d}r \ge -\sqrt{\pi}\epsilon.
	\]
	Hence,  we obtain $r_* > r' \ge \sqrt{\log \frac{1}{\sqrt{\pi}\epsilon}}$ for $\lambda \le 0$, $0 < \epsilon \le \frac{1}{\sqrt{\pi}}$.
\end{proof}

\begin{lemma}\label{lem3.2}
	For any fixed $n \ge 2$, $\lambda \in \mathbb{R}$, there exists $\epsilon_1 > 0$ so that $f_{\epsilon}(r) = -\lambda$ $($i.e., $h_\epsilon(r) = 0$$)$ has a solution in $[\sqrt{n},\sqrt{2n}]$ for $|\epsilon| \le \epsilon_1$.
\end{lemma}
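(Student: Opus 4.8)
The plan is to use continuous dependence of solutions on the parameter $\epsilon$ together with an explicit understanding of the limiting solution $f_0 \equiv -\lambda$. First I would set up the comparison at $\epsilon = 0$: the solution $f_0(r)$ of \eqref{0316eq2.7} with $f_0(0) = -\lambda$ and $f_0'(0) = 0$ is exactly the constant $f_0 \equiv -\lambda$ (one checks directly that $f \equiv -\lambda$ solves \eqref{0316eq2.7}, and uniqueness at the singular point $r=0$, invoked in the text following \eqref{0316eq2.8}, pins it down). Hence $h_0(r) = f_0(r) + \lambda \equiv 0$, and in particular $h_0$ vanishes identically on $[\sqrt{n}, \sqrt{2n}]$.

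Next I would upgrade this to $|\epsilon|$ small but nonzero. The key point is that $h_0 \equiv 0$ is not enough by itself for a sign-change argument, so instead I would look at the sign of $h_\epsilon'$ or at a first-order (in $\epsilon$) expansion. The cleanest route: fix the compact interval $[\sqrt{n}, \sqrt{2n}]$, which lies strictly away from the singularity $r = 0$, so on a slightly larger interval $[\delta, \sqrt{2n}+\delta]$ the flow of \eqref{0316eq2.7} is a smooth ODE with no singularity, and the solution map $\epsilon \mapsto f_\epsilon$ is smooth into $C^1$ of that interval. Write $f_\epsilon = -\lambda + \epsilon\, g + o(\epsilon)$ where $g = g(r)$ solves the linearization of \eqref{0316eq2.7} about the constant solution with $g(0) = 1$, $g'(0) = 0$. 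Linearizing \eqref{0316eq2.7} at $f \equiv -\lambda$ (so $f' = 0$, $\sqrt{1+(f')^2}=1$): the right-hand side $(r - \frac{n-1}{r})f' - f - \lambda\sqrt{1+(f')^2}$ has variation $(r-\frac{n-1}{r})g' - g$ to first order (the $-\lambda\sqrt{1+(f')^2}$ term contributes $O(\epsilon^2)$ since its derivative in $f'$ vanishes at $f'=0$), and the left side $\frac{f''}{1+(f')^2}$ has variation $g''$. So $g$ solves $g'' = (r - \frac{n-1}{r})g' - g$, i.e. $g'' - (r-\frac{n-1}{r})g' + g = 0$, with $g(0)=1$, $g'(0)=0$. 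I would then show $g$ changes sign, or at least is not identically zero and $h_\epsilon = \epsilon g + o(\epsilon)$ takes both signs somewhere in $[\sqrt{n},\sqrt{2n}]$ — more robustly, I would just track $h_\epsilon(\sqrt{n})$ and $h_\epsilon(\sqrt{2n})$ and use Lemma \ref{0316lem2.1}-type concavity, or directly estimate $g$ on the interval to see it has opposite signs at the endpoints (or vanishes in the interior), then apply the intermediate value theorem to $h_\epsilon$ for $|\epsilon|$ small.

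The main obstacle I anticipate is the possibility that the first-order term $g$ has a definite sign on $[\sqrt{n},\sqrt{2n}]$, which would make the naive IVT argument fail and force one to look at higher-order behavior or to choose the interval more cleverly — this is presumably why the specific window $[\sqrt{n},\sqrt{2n}]$ was selected (note $r^2 = n$ is exactly where the coefficient $r - \frac{n-1}{r}$ of $g'$ changes character relative to the balance with the zeroth-order term, and the ODE for $g$ behaves like a damped/anti-damped oscillator whose sign is controlled near $r \sim \sqrt{n}$). A cleaner alternative that sidesteps the linearization entirely: use Lemma \ref{0316lem2.1} (for $\epsilon > 0$, $f_\epsilon$ is strictly concave, decreasing after $r=0$, and eventually blows up to $-\infty$) together with continuity to argue that $f_\epsilon(\sqrt{n}) > -\lambda$ for small $\epsilon>0$ while $f_\epsilon$ must dip below $-\lambda$ by the time concavity has forced enough descent — quantifying "enough descent by $r = \sqrt{2n}$" via a crude comparison estimate on $f_\epsilon''$ on $[\sqrt{n},\sqrt{2n}]$, then invoking the intermediate value theorem on $[\sqrt{n},\sqrt{2n}]$; the $\epsilon < 0$ case is symmetric with $f_\epsilon$ convex and increasing. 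I would present whichever of these gives the cleanest constants, expecting the concavity/continuity route to be the one that generalizes to the full range of $\lambda$ claimed.
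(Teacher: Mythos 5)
Your setup is the same as the paper's: linearize (\ref{0316eq2.7}) about the constant solution $f_0\equiv-\lambda$, obtain $w''=(r-\frac{n-1}{r})w'-w$ with $w(0)=1$, $w'(0)=0$ (your linearized equation is correct, and the observation that the $\lambda\sqrt{1+(f')^2}$ term contributes nothing at first order is exactly right), and then conclude by smooth dependence on $\epsilon$ plus the intermediate value theorem. But the proof as written has a genuine gap: everything hinges on showing that $w$ is strictly positive at $r=\sqrt{n}$ and strictly negative at $r=\sqrt{2n}$, and you never establish this. You explicitly flag it as ``the main obstacle I anticipate'' and leave it unresolved, offering only that you would ``show $g$ changes sign \dots or directly estimate $g$ on the interval.'' That sign determination is the entire content of the lemma; without it there is no proof.

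Your proposed fallback --- avoid the linearization and instead use Lemma \ref{0316lem2.1} concavity plus a ``crude comparison estimate on $f_\epsilon''$'' to show enough descent by $r=\sqrt{2n}$ --- does not work. On the fixed compact interval $[\sqrt n,\sqrt{2n}]$ one has $f_\epsilon+\lambda=O(\epsilon)$ and $f_\epsilon''=O(\epsilon)$, so the total descent and the initial height are of the same order in $\epsilon$; whether the curve crosses $-\lambda$ before $\sqrt{2n}$ is decided precisely by the sign of the first-order term $w$, and no crude (sign-indefinite) bound can settle it. The paper closes the gap as follows: substitute $\xi=r^2$ to turn the linearized equation into the confluent hypergeometric equation $4\xi\,w_{\xi\xi}=2(\xi-n)w_\xi-w$ with $w(0)=1$, $w_\xi(0)=-\frac{1}{2n}$; differentiate once more and apply a maximum-principle argument to conclude $w_{\xi\xi}<0$ and $w_\xi\le-\frac{1}{2n}<0$ everywhere; strict concavity then gives $w|_{\xi=2n}<1-\frac{2n}{2n}\le 0$, while evaluating the equation at $\xi=n$ gives $4n\,w_{\xi\xi}(n)=-w(n)$, hence $w|_{\xi=n}=-4n\,w_{\xi\xi}(n)>0$. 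This is why the window $[\sqrt n,\sqrt{2n}]$ is chosen: $\xi=n$ is where the first-order coefficient vanishes so the equation itself forces the sign of $w$, and $\xi=2n$ is where the concave upper bound $1-\xi/(2n)$ first reaches zero. Supplying this computation (or an equivalent one) is necessary to complete your argument.
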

\begin{proof}
	We adopt an argument from the appendix B of \cite{DK}.
In order to understand the behavior of $f_{\epsilon}(r)$ when $\epsilon$ is close to 0, we study the linearization of the rotational $\lambda$-hypersurface differential equation (\ref{0316eq2.7}) near the plane $f_0(r) = -\lambda$. We define $w$ by
	$$
	w(r) = \dfrac{{\rm d}}{{\rm d}\epsilon}\bigg|_{\epsilon = 0} f_{\epsilon}(r).
	$$	
	Since $f_{\epsilon}(r)$ satisfies  equation
	$$
	\dfrac{ f_{\epsilon}''}{1+ (f_{\epsilon}')^2} = (r-\dfrac{n-1}{r}) f_{\epsilon}' -  f_{\epsilon} - \lambda\sqrt{1+ (f_{\epsilon}')^2},
	$$
	by  differentiating  the above equation with respect to $\epsilon$ and letting  $\epsilon = 0$, we obtain a differential equation for $w$:
	\begin{equation}\label{eq3.1}
	w'' = (r - \dfrac{n-1}{r})w' - w,
	\end{equation}
	with $w(0) = 1$ and $w'(0) = 0$, where we have used $f_{0}'(r) = 0$. We will show that $w(\sqrt{n}) > 0$ and $w(\sqrt{2n}) < 0 $ which lead to the lemma.
	\\
	Defining $\xi = r^2$, the (\ref{eq3.1}) becomes into the following differential equation:
	\begin{equation}\label{eq3.2}
	4\xi \dfrac{{\rm d}^2w}{{\rm d}\xi^2} = 2(\xi - n)\dfrac{{\rm d}w}{{\rm d}\xi} - w
	\end{equation}
	with the initial conditions at $\xi = 0$:
	\[
	w(0) = 1, \ \ \ \dfrac{{\rm d}w}{{\rm d}\xi}(0) = -\dfrac{1}{2n}.
	\]
	The equation (\ref{eq3.2}) is one of the classical differential equations, namely a confluent hypergeometric equation. Up to a dilation of the argument $\xi$, the solutions $w$ used here are called
Kummer functions. Taking derivatives of (\ref{eq3.2}), we have the following second order differential equation:
	\begin{equation}\label{eq3.3}
	4\xi \dfrac{{\rm d}^3w}{{\rm d}\xi^3} = 2(\xi - n-2)\dfrac{{\rm d}^2w}{{\rm d}\xi^2} + \dfrac{{\rm d}w}{{\rm d}\xi}
	\end{equation}
	with the initial conditions at $\xi = 0$:
	\[
	\dfrac{{\rm d}w}{{\rm d}\xi}(0) = -\dfrac{1}{2n}, \ \ \ \dfrac{{\rm d}^2w}{{\rm d}\xi^2}(0) = -\dfrac{1}{4n(n+2)}.
	\]
	We also note that the differential equation (\ref{eq3.3}) for $\frac{{\rm d}w}{{\rm d}\xi}$ satisfies a maximum principle,
	which yields that $\frac{{\rm d}^2w}{{\rm d}\xi^2} < 0$ and then $\frac{{\rm d}w}{{\rm d}\xi} \le \frac{{\rm d}w}{{\rm d}\xi}(0) < 0$.
	Hence $w$ is strictly concave and strictly decreasing on $[0,\infty)$.
	Combining this fact with $w|_{\xi = 0}= 1 $ and $\frac{{\rm d}w}{{\rm d}\xi}(0) = -\frac{1}{2n}$
	we obtain $w|_{\xi = 2n} < 0$. Let $\xi = n$ in (\ref{eq3.2}), we get $w|_{\xi = n} > 0$.
	Now we have proved that $w|_{\xi = n} > 0$ and  $w|_{\xi = 2n} < 0$, i.e.,  $w|_{r = \sqrt{n}} > 0$ and $w|_{r = \sqrt{2n}} < 0 $.
\end{proof}
\begin{lemma}\label{lem3.3}
	Suppose $n \ge 2$, $-\frac{25n-6}{30\sqrt{n}} \le \lambda <0$. %$r_*>\sqrt{2n}$%
	If  there exists a point $r_0 \in [\sqrt{n}, \infty)$ so that $h(r_0) = 0$, we have
\[
	h(r) > \dfrac{30n}{r}h'(r)
\]
for $r \in [r_0, r_*)$.
\end{lemma}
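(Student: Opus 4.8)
\textit{Strategy.} The plan is to recast the inequality as an upper bound for the quotient $q(r):=h(r)/h'(r)$ and to run a continuity (barrier) argument. By Lemma \ref{0316lem2.1} (applied to $f=f_\epsilon$ with $\epsilon>0$) one has $h''<0$, hence $h'<0$, on $(0,r_*)$; in particular $h'$ never vanishes there, $h$ is strictly decreasing, and $h(r)<h(r_0)=0$ for $r\in(r_0,r_*)$. Dividing $h(r)>\frac{30n}{r}h'(r)$ by $h'(r)<0$ reverses it, so the assertion is equivalent to
\[
q(r)<\frac{30n}{r}\qquad\text{for all }r\in[r_0,r_*),
\]
and $q(r_0)=0<\frac{30n}{r_0}$ at the left endpoint. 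It therefore suffices to rule out a first touching point: if $r_1\in(r_0,r_*)$ were the smallest value with $q(r_1)=\frac{30n}{r_1}$, then $\bigl(q-\tfrac{30n}{r}\bigr)'(r_1)\ge 0$, i.e. $q'(r_1)\ge-\frac{30n}{r_1^{2}}$, and the aim is to contradict this.

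\textit{The range $r\le 6\sqrt n$.} Here I would differentiate, $q'=1-\dfrac{h\,h''}{(h')^{2}}$; since $h<0$ and $h''<0$ on $(r_0,r_*)$ this gives $q'<1$ there, whence $q(r)<r-r_0$ on $(r_0,r_*)$. Using $r_0\ge\sqrt n$ and the elementary fact $r^{2}-\sqrt n\,r\le 30n$ on $[\sqrt n,6\sqrt n]$, one concludes $q(r)<r-\sqrt n\le\frac{30n}{r}$ for $r\in[r_0,6\sqrt n]$. Consequently any first touching point must satisfy $r_1>6\sqrt n$, so $r_1^{2}>36n>31n-1$; this sign fact is what makes the next step work.

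\textit{The crux: the range $r>6\sqrt n$.} At $r_1$ I would insert the $h$-form of (\ref{0316eq2.7}),
\[
h''=(1+(h')^{2})\Bigl[\bigl(r-\tfrac{n-1}{r}\bigr)h'-h+\lambda\bigl(1-\sqrt{1+(h')^{2}}\,\bigr)\Bigr],
\]
together with the touching relation $h(r_1)=\frac{30n}{r_1}h'(r_1)$, into $q'=1-h\,h''/(h')^{2}$. Writing $a:=h'(r_1)<0$ and using $\bigl(r_1-\tfrac{n-1}{r_1}\bigr)a-\tfrac{30n}{r_1}a=\tfrac{r_1^{2}-31n+1}{r_1}a$, this yields
\[
q'(r_1)=1-\frac{30n(1+a^{2})(r_1^{2}-31n+1)}{r_1^{2}}-\frac{30n(1+a^{2})\lambda\bigl(1-\sqrt{1+a^{2}}\,\bigr)}{r_1\,a}.
\]
The last summand is positive; since $\sqrt{1+a^{2}}-1<|a|$, $\frac{30n}{r_1}<5\sqrt n$ and $|\lambda|\le\frac{25n-6}{30\sqrt n}$, it is smaller than $\frac{(25n-6)(1+a^{2})}{6}$, while $r_1^{2}>36n$ forces the middle summand $\frac{30n(1+a^{2})(r_1^{2}-31n+1)}{r_1^{2}}$ to exceed $\frac{(25n+5)(1+a^{2})}{6}$. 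Hence
\[
q'(r_1)<1-\frac{(25n+5)(1+a^{2})}{6}+\frac{(25n-6)(1+a^{2})}{6}=1-\frac{11(1+a^{2})}{6}\le-\frac56<-\frac{30n}{r_1^{2}},
\]
the last inequality again from $r_1^{2}>36n$. This contradicts $q'(r_1)\ge-\frac{30n}{r_1^{2}}$, so no touching point exists; thus $q(r)<\frac{30n}{r}$ on $[r_0,r_*)$ and the lemma follows.

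\textit{Main obstacle.} The only delicate step is this last chain of estimates. The middle summand has the sign one needs only after $r_1^{2}>31n-1$, which is precisely why the bound on $[r_0,6\sqrt n]$ must be secured first; and the chain closes only because the hypothesis $-\frac{25n-6}{30\sqrt n}\le\lambda<0$ is calibrated so that $\frac{25n+5}{6}-\frac{25n-6}{6}=\frac{11}{6}$ barely beats $1+\frac{30n}{36n}=\frac{11}{6}$, the slack coming from the strict inequality $\sqrt{1+a^{2}}-1<|a|$ and from $r_1>6\sqrt n$.
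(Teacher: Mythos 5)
Your proof is correct and is essentially the paper's argument in a different normalization: the paper works with $\Phi(r)=\tfrac{1}{30n}rh(r)-h'(r)$, which equals $-h'(r)\bigl(1-\tfrac{r}{30n}q(r)\bigr)$, so your barrier $q<\tfrac{30n}{r}$ is the same statement, and both proofs split at $r=6\sqrt n$ (via $r^{2}-\sqrt n\,r-30n\le 0$), run a first-touching-point argument, substitute the ODE, and control the $\lambda$-term with $\sqrt{1+a^{2}}-1\le|a|$. The only difference is bookkeeping: you contradict $q'(r_1)\ge-\tfrac{30n}{r_1^{2}}$ with explicit numeric bounds, while the paper contradicts $\Phi'(\bar r)\le 0$ by showing a quadratic in $\bar r$ is negative.
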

\begin{proof}
	%We adopt an argument from \cite{D}.
	 Let $\Phi(r) = \frac{1}{30n}rh(r) - h'(r)$. We want to show that $\Phi(r) > 0$ for $r \in [r_0, r_*)$.
	 If $r = r_0$, then $\Phi(r_0) = - h'(r_0) > 0$.  If $r > r_0$, by a  direct computation, we obtain
	\[
	\aligned
	\Phi(r)
	&= \frac{1}{30n}rh(r) - h'(r)\\
	&= \dfrac{1}{30n}r\int_{r_0}^{r}h'(\xi){\rm d}\xi - h'(r) \\
	&> \dfrac{1}{30n}r(r-r_0)h'(r) - h'(r)\\
	&= \dfrac{1}{30n}h'(r)(r^2 - r_0r - 30n)\\
	&\ge  \dfrac{1}{30n}h'(r)(r^2 - \sqrt{n}r - 30n),\\
	\endaligned	
	\]
	that is, $\Phi(r) > 0$ for $r \le 6\sqrt{n}$.
	\\
	Suppose $\Phi(r) = 0$ for some $r \in [r_0, r_*)$. Then $r > 6\sqrt{n}$ and there exists a point $\bar{r} \in (6\sqrt{n}, r_*)$ so that $\Phi(\bar{r}) = 0$ and $\Phi(r) > 0$ for $r \in  [r_0, \bar{r})$. This implies that $\Phi'(\bar{r}) \le 0$ and $\dfrac{1}{30n}\bar{r}h(\bar{r}) = h'(\bar{r})$. On the other hand, since
	\begin{align*}
	\Phi'(\bar{r})
	&=  \dfrac{1}{30n}h(\bar{r}) + \dfrac{1}{30n}\bar{r}h'(\bar{r}) - h''(\bar{r})  \\
	&\ge \dfrac{1}{30n}h(\bar{r}) + \dfrac{1}{30n}\bar{r}h'(\bar{r}) - \dfrac{h''(\bar{r})}{1 + h'(\bar{r})^2} \\
	&= \dfrac{1}{30n}h(\bar{r}) + \dfrac{1}{30n}\bar{r}h'(\bar{r}) -
	\left[ (\bar{r} - \dfrac{n - 1}{\bar{r}})h'(\bar{r}) - f(\bar{r}) - \lambda\sqrt{1 + h'(\bar{r})^2} \right]  \\
	&= \dfrac{1}{30n}h(\bar{r}) + \dfrac{1}{30n}\bar{r}h'(\bar{r}) -
	\left[ (\bar{r} - \dfrac{n - 1}{\bar{r}})h'(\bar{r}) - h(\bar{r}) + \lambda(1 - \sqrt{1 + h'(\bar{r})^2}) \right]  \\
	&\ge \dfrac{1}{30n}h(\bar{r}) + \dfrac{1}{30n}\bar{r}h'(\bar{r}) -
	\left[ (\bar{r} - \dfrac{n - 1}{\bar{r}})h'(\bar{r}) - h(\bar{r}) + \lambda h'(\bar{r}) \right]  \\
	&= \dfrac{1}{30n}h(\bar{r}) + \dfrac{1}{900n^2}\bar{r}^2h(\bar{r}) -
	\left[\dfrac{1}{30n}\bar{r} (\bar{r} - \dfrac{n - 1}{\bar{r}} + \lambda)h(\bar{r}) - h(\bar{r}) \right]  \\
	&= \dfrac{1}{900n^2}h(\bar{r})\left[ (1 - 30n)\bar{r}^2 - 30n\lambda \bar{r} +930n^2 \right]  \\
	&> 0,
\end{align*}
where we have used that $n \ge 2$, $-\frac{25n-6}{30\sqrt{n}} \le \lambda <0$ and $\bar{r} > 6\sqrt{n}$ in the last inequality,
this contradicts  $\Phi'(\bar{r}) \le 0$. Hence, we   complete  the proof.
\end{proof}
\noindent {\it Proof of Proposition \ref{prop3.1}.}  %We adopt an argument from \cite{D}.
The first part of the proposition \ref{prop3.1} has been shown in the lemma \ref{lem3.1}. We know that $r' \ge \sqrt{\log \frac{1}{\sqrt{\pi}\epsilon}}$ for $0 < \epsilon \le \frac{1}{\sqrt{\pi}}$ from  the lemma \ref{lem3.1}. In particular, $r'  \ge \sqrt{\log \frac{1}{\sqrt{\pi}\epsilon}} \ge 7\sqrt{n}$ for $0 < \epsilon \le \frac{1}{\sqrt{\pi}e^{49n}}$. Choosing  $\bar{\epsilon} = \min \left\lbrace \epsilon_1,  \frac{1}{\sqrt{\pi}e^{49n}} \right\rbrace $ and assuming  $0< \epsilon \le \bar{\epsilon}$,
 by the lemma \ref{lem3.2} and lemma \ref{lem3.3}, we have $h(r_*) < h(r') < h(r_0) =0$ (that is, $x_* < -\lambda$) and
\[
  h(r') > -\dfrac{30n}{r'} \ge -\dfrac{30n}{\sqrt{\log \frac{1}{\sqrt{\pi}\epsilon}}}.
\]
We will extend this estimate for $h(r')$ to an estimate for $h(r_*) = x_* + \lambda$. For $r \ge r'$, we have
\[
\aligned
   h''(r)
   &< h'(r)^2\dfrac{h''(r)}{1 + h'(r)^2}\\
   &=   h'(r)^2\left[ (r - \dfrac{n - 1}{r})h'(r) - h(r) + \lambda(1 - \sqrt{1 + h'(r)^2}) \right]\\
   &=   h'(r)^2\left[ (r - \dfrac{n - 1}{r})h'(r) - h(r) + \lambda h'(r) \right]\\
   &<   h'(r)^3(r - \dfrac{31n-1}{r} + \lambda)\\
   &\le \dfrac{1}{4}rh'(r)^3,
\endaligned
\]
where we have used that $n \ge 2,\ r \ge 7\sqrt{n}$ and $-\frac{23n +4}{28\sqrt{n}} \le \lambda <0$ in the last inequality.
\\
Integrating the previous inequality from $r$ to $r_*$, implies
\[
   h'(r)^2 \le \dfrac{4}{r_*^2 - r^2}
\]
for $ r \ge r'$. Since $h'(r) < 0$, we have
\begin{equation}\label{eq3.4}
	h'(r) \ge -\dfrac{2}{\sqrt{r_*^2 - r^2} } \ge -\dfrac{1}{\sqrt{r_* + r'}}\dfrac{2}{\sqrt{r_* - r}}
\end{equation}
for $r \in [r', r_*)$. At $r'$, this tells us that
\[
    -\dfrac{\sqrt{r_* - r'}}{\sqrt{r_* + r'}} \ge -\dfrac{2}{r_* + r'}.
\]
Finally, integrating (\ref{eq3.4}) from $r'$ to $r_*$, we have
\[
   h(r_*) - h(r') \ge -\dfrac{4}{\sqrt{r_* + r'}}\sqrt{r_* - r'},
\]
and therefore
\[
\aligned
    h(r_*)
    &\ge h(r') - \dfrac{4}{\sqrt{r_* + r'}}\sqrt{r_* - r'}\\
    &\ge -\dfrac{30n}{r'} - \dfrac{8}{r_* + r'}\\
    &\ge -\dfrac{30n+4}{r'}\\
    &\ge -\frac{30n+4}{\sqrt{\log \frac{1}{\sqrt{\pi}\epsilon}}}.
\endaligned
\]
Hence $x_* \ge  -\frac{30n+4}{\sqrt{\log \frac{1}{\sqrt{\pi}\epsilon}}} -\lambda$, this competes the proof. \qed

\subsection{Behavior near the round sphere}
As in  the proof of the lemma \ref{lem3.2}, we also make use of  the method from the appendix B of \cite{DK}.
Recall equation (\ref{0316eq2.8}),  if we can write the profile curve in the form $\rho = \rho(\phi)$ in polar coordinates, where $\rho = \sqrt{x^2\,+\,r^2 }$ and $\phi\,=\,\arctan(r/x)$,  then (\ref{0316eq2.8}) tells us that $\rho = \rho(\phi)$ satisfies
\begin{equation}\label{0314eq3.5}
    \rho''\,=\,\frac{1}{\rho}\left\lbrace \rho'^2\,+\, (\rho^2\,+\,\rho'^2)\left[n\,-\,\rho^2\,-\,(n\,-\,1)\frac{\rho'}{\rho}\cot\phi\,-\,\lambda\,\sqrt{\rho^2\,+\,\rho'^2} \right] \right\rbrace. \tag{2.8}
\end{equation}
This ordinary differential equation has a constant solution
\[
 \rho\,=\,\frac{-\lambda\,+\,\sqrt{\lambda^2\,+\,4\,n}}{2}
\]
which corresponds to the round sphere. We note that this equation has a singularity when $\phi =  0$ due to the $\cot\phi$ term. Let $\rho(\phi, \epsilon)$ be the solution to (\ref{0314eq3.5}) with initial conditions  $\rho(0, \epsilon) = \frac{-\lambda + \sqrt{\lambda^2 + 4\,n}}{2} + \epsilon$ and $\frac{{\rm d}\rho}{{\rm d}\phi}(0, \epsilon) = 0$. As is Drugan and Kleene said in \cite{DK}, for $\lambda = 0$, the solution $\rho(\phi, \epsilon)$ is smooth when $\phi\in [0, \pi/2]$ and $\epsilon$ is close to 0. This is also true for  $\lambda \in \R$.
\\
In order to understand the behavior of $\rho(\phi,\epsilon)$ when $\epsilon$ is close to 0, we study the linearization of
the rotational self-shrinker differential equation near the round sphere $\rho(\phi,0) =  \frac{-\lambda + \sqrt{\lambda^2 + 4\,n}}{2}
$. We define $w$ by
\[
   w(\phi) = \frac{\rm d}{{\rm d}\epsilon}\bigg|_{\epsilon = 0}\rho(\phi,\epsilon).
\]
Then $w$ satisfies the (singular) linear differential equation:
\begin{equation}\label{0315eq3.6}
        w''  = -(n\,-\,1)\cot \phi\,w'\,-\,\frac{-\lambda\,+\,\sqrt{\lambda^2\,+\,4\,n}}{2}\,\sqrt{\lambda^2\,+\,4n}\,w
\end{equation}
with $w(0)  = 1$ and $w'(0)=0$. Letting $A(\lambda) = \frac{-\lambda\,+\,\sqrt{\lambda^2\,+\,4\,n}}{2}\,\sqrt{\lambda^2\,+\,4n}$, we may write $A(\lambda)$ for  $A$ when there is no ambiguity. Note that the sign of $A$ is positive for $\lambda \in \R$. In the following lemma, we claim that $w(\pi/2)<0$ and $w'(\pi/2)<0$ which yields that, for $\epsilon < 0$ and $\epsilon$ closed to $0$, $r>\frac{-\lambda\,+\,\sqrt{\lambda^2\,+\,4\,n}}{2}$ and $\pi/2 < \theta < \pi$ at the first point where the profile curve meets $r$-axis.
\begin{lemma}\label{0317lem3.4}
Let $-\frac{2}{\sqrt{n+2}} < \lambda \le 0$, $w$ be the solution to (\ref{0315eq3.6}) with $w(0) = 1$ and $w'(0) = 0$. Then $w(\pi/2) < 0$ and $w'(\pi/2) < 0$.
\end{lemma}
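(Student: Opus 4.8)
The plan is to analyze the ODE \eqref{0315eq3.6} on $[0,\pi/2]$ and extract sign information at the endpoint $\phi=\pi/2$. First I would put the equation in a more transparent form by multiplying through by the integrating factor $(\sin\phi)^{n-1}$, so that \eqref{0315eq3.6} becomes
\[
\bigl((\sin\phi)^{n-1}w'\bigr)' = -A\,(\sin\phi)^{n-1}\,w.
\]
This makes $(\sin\phi)^{n-1}w'(\phi)$ manifestly nonincreasing as long as $w$ stays nonnegative, and it also shows (via l'Hôpital near $\phi=0$, using $w(0)=1$, $w'(0)=0$) that $w'(\phi)<0$ for $\phi>0$ small. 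So $w$ starts decreasing immediately. The heart of the matter is then a Sturm-type comparison: I want to show the solution has crossed zero strictly before $\phi=\pi/2$, equivalently that the first zero of $w$ occurs in $(0,\pi/2)$, which forces both $w(\pi/2)<0$ and (since $w'$ can only decrease once $w\le 0$) $w'(\pi/2)<0$.

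The natural comparison is with a solution of the constant-coefficient-type companion obtained by the substitution $v(\phi)=(\sin\phi)^{(n-1)/2}w(\phi)$, which turns the self-adjoint equation above into a Schrödinger form
\[
v'' + \Bigl(A + \tfrac{n-1}{2} - \tfrac{(n-1)(n-3)}{4}\cot^2\phi - \text{(lower order)}\Bigr)v = 0,
\]
or more cleanly one compares directly against $\cos\bigl(\sqrt{A}\,\phi\bigr)$-type barriers. Concretely, I expect to use the Sturm comparison theorem: if $A$ is large enough that $\sqrt{A}\cdot(\pi/2)$ exceeds the relevant half-period, oscillation is forced. The condition $\lambda > -\frac{2}{\sqrt{n+2}}$ is precisely what is needed here: a short computation gives $A(\lambda) = \frac{-\lambda+\sqrt{\lambda^2+4n}}{2}\sqrt{\lambda^2+4n}$, and $A$ is decreasing in $\lambda$ on the relevant range, with the threshold value $\lambda=-\frac{2}{\sqrt{n+2}}$ chosen so that $A$ stays above the critical constant $n+2$ (note $A(0)=2n$ and one checks $A\bigl(-\tfrac{2}{\sqrt{n+2}}\bigr)=n+2$). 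The bound $A>n+2$ is the sharp threshold guaranteeing that the Kummer/confluent-hypergeometric solution of \eqref{0315eq3.6} with these initial data has its first zero before $\phi=\pi/2$; this is the same phenomenon exploited by Drugan--Kleene for the self-shrinker case $\lambda=0$, where $A=2n>n+2$.

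The main obstacle is making the comparison rigorous uniformly in $n\ge 2$ and down to the sharp constant: one must handle the singular drift term $-(n-1)\cot\phi\,w'$ near $\phi=0$ (where naive Sturm comparison fails because the coefficient blows up) and near $\phi=\pi/2$. I would deal with the $\phi=0$ end by the weighted reformulation above, which absorbs the singularity, and establish that $w$ is strictly concave-like (i.e. $v''+(\text{positive})v=0$ with the potential bounded below by $A - \frac{(n-1)}{4}$ plus controllable terms) on the whole interval; then a single application of Sturm's theorem against $\cos(\mu\phi)$ with $\mu^2$ equal to that lower bound does it, provided $\mu\cdot\frac{\pi}{2}>\frac{\pi}{2}$, i.e. $\mu>1$, i.e. $A-\frac{n-1}{4}\cdot(\text{const})>1$ — and the hypothesis on $\lambda$ is exactly calibrated to yield this. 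Once $w(\pi/2)<0$ is known, $w'(\pi/2)<0$ follows immediately because $(\sin\phi)^{n-1}w'$ is strictly decreasing past the first zero of $w$ and was already negative there.
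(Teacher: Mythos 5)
Your proposal has two concrete errors that break the argument, plus a technical obstruction you acknowledge but do not actually overcome.

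First, the sign of the monotonicity after the first zero is backwards. From your own self-adjoint form $\bigl((\sin\phi)^{n-1}w'\bigr)' = -A(\sin\phi)^{n-1}w$, the quantity $(\sin\phi)^{n-1}w'$ is \emph{increasing} wherever $w<0$, not decreasing. So even granting that the first zero of $w$ occurs in $(0,\pi/2)$, nothing in your argument prevents $w'$ from turning positive and $w$ from climbing back through zero before $\pi/2$; both conclusions $w(\pi/2)<0$ and $w'(\pi/2)<0$ are left unproved. Ruling out this turnaround is precisely where the hypothesis on $\lambda$ must enter, and your proof never uses it for that purpose.

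Second, you have misread the role of the hypothesis. Since $A(\lambda)$ is decreasing in $\lambda$ with $A(0)=2n$ and $A\bigl(-\tfrac{2}{\sqrt{n+2}}\bigr)=2n+2$ (not $n+2$), the condition $\lambda>-\tfrac{2}{\sqrt{n+2}}$ is an \emph{upper} bound $A<2n+2$; the lower bound $A\ge 2n>n$ comes for free from $\lambda\le 0$. Under your reading ($A$ large forces oscillation) the hypothesis would be vacuous — more negative $\lambda$ would only help — which is a sign the mechanism is different. The paper instead substitutes $\xi=\cos\phi$ to get the Legendre-type equation $(1-\xi^2)w_{\xi\xi}=n\xi w_\xi-Aw$, differentiates twice, computes $w_{\xi\xi}(1)=-\tfrac{(n-A)A}{n(n+2)}>0$ and $w_{\xi\xi\xi}(1)<0$ using $n-A<0$ and $2n+2-A>0$, and then runs a maximum principle on the equation for $w_{\xi\xi}$, whose zeroth-order coefficient is exactly $2n+2-A$; positivity of that coefficient (i.e.\ the upper bound on $A$) is what keeps $w_{\xi\xi}>0$ throughout and forces $w(0)<0$, $w_\xi(0)>0$ at $\xi=0$ (that is, at $\phi=\pi/2$). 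Finally, your proposed single Sturm comparison cannot be made uniform: the Liouville potential is $A+\tfrac{n-1}{2}-\tfrac{(n-1)(n-3)}{4}\cot^2\phi$, which tends to $-\infty$ as $\phi\to 0^+$ for every $n\ge 4$, so the claimed positive lower bound on the potential over all of $(0,\pi/2]$ is false and the comparison with $\cos(\mu\phi)$, $\mu>1$, does not go through as described.
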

\begin{proof}
 By making use of the substitution $\xi = \cos \phi$,  (\ref{0315eq3.6}) becomes into the following Legendre
  type differential equation:
\begin{equation}\label{0315eq3.7}
	(1 - \xi^2)\frac{{\rm d}^2w}{{\rm d}\xi^2} = n\,\xi\,\frac{{\rm d}w}{{\rm d}\xi} -A\,w
\end{equation}
with the initial conditions at $\xi= 1$:
\[
   w(1) = 1,\ \ \ \frac{{\rm d}w}{{\rm d}\xi}(1) = \frac{A}{n}.
\]
To prove the lemma, we need to show that $w = w(\xi)$ satisfies $w(0) < 0$ and $\frac{{\rm d}w}{{\rm d}\xi}(0) > 0$.
\\
Taking derivatives of (\ref{0315eq3.7}), we have the following second order differential equations:
\begin{gather}
	 	(1 - \xi^2)\frac{{\rm d}^3w}{{\rm d}\xi^3} = (n + 2)\,\xi\,\frac{{\rm d}^2w}{{\rm d}\xi^2} +(n - A)\,\frac{{\rm d}w}{{\rm d}\xi},  \label{0315eq3.8}\\
	 		(1 - \xi^2)\frac{{\rm d}^4w}{{\rm d}\xi^4} = (n + 4)\,\xi\,\frac{{\rm d}^3w}{{\rm d}\xi^3} + (2n+2 - A)\,\frac{{\rm d}^2w}{{\rm d}\xi^2}.  \label{0315eq3.9}
\end{gather}
It follows from (\ref{0315eq3.8}) and (\ref{0315eq3.9}) that
\[
    \frac{{\rm d}^2w}{{\rm d}\xi^2}(1) = - \frac{(n -A)A}{n(n + 2)},\ \ \ \frac{{\rm d}^3w}{{\rm d}\xi^3}(1) = \frac{(2n +2 -A)(n -A)A}{n(n + 2)(n + 4)}.
\]
By a direct calculation, we get that
$
 n - A = \frac{1}{2}\lambda(\sqrt{\lambda^2 + 4n} - \lambda) - n < 0
$
for $\lambda \le 0$, and we also get
$
   2n +2 - A = -\frac{1}{2}(\lambda^2 -4 -\lambda\sqrt{\lambda^2 + 4n})
$. Then one can obtain $\lambda > -\frac{2}{\sqrt{n + 2}}$ by solving inequality $ 2n +2 - A > 0$. Summing up, we have $ n - A <0$ and $  2n +2 - A > 0$ for $ -\frac{2}{\sqrt{n + 2}} < \lambda \le 0$, which tell us that
$\frac{{\rm d}^2w}{{\rm d}\xi^2}(1) > 0$ and $\frac{{\rm d}^3w}{{\rm d}\xi^3}(1) < 0$.  We also note that the differential equation (\ref{0315eq3.9}) for $ \frac{{\rm d}^2w}{{\rm d}\xi^2}$ satisfies a maximum principle since $2n +2 - A > 0$.
\\
Using the same method as in \cite{DK}, we can obtain $\frac{{\rm d}^2w}{{\rm d}\xi^2}(0) > 0$ and $\frac{{\rm d}^3w}{{\rm d}\xi^3}(0) < 0$, which follows from (\ref{0315eq3.7}) and (\ref{0315eq3.8}) that $w(0) < 0$ and $\frac{{\rm d}w}{{\rm d}\xi}(0) > 0$. Regarding $w = w(\phi)$ as a function of $\phi$, this says that $w(\pi/2) < 0$ and $\frac{{\rm d}w}{{\rm d}\xi}(\pi/2)<0$, which proves the lemma.
\end{proof}

\section{Proof of the theorem}

\noindent {\it Proof of Theorem \ref{0316thm1.1}.} Let $S[x]$ denote the solution to (\ref{0316eq2.5}) with initial conditions $S[x](0) = (x, 0, \pi/2)$.  According to the lemma \ref{0316lem2.1} we know that, for $x > -\lambda$, the first component of $P(S[x])$ written as a graph over the $r$-axis is concave down and decreasing before it blows-up at the point $B_x = (f_{x+\lambda}(r_*^{x+\lambda}),r_*^{x+\lambda}) = (x_*^{x+\lambda},r_*^{x+\lambda})$. The proposition \ref{prop3.1} tells us that when $x > -\lambda$ and $x$ closed to $-\lambda$, $B_x$ is in the first quadrant. From the lemma \ref{0317lem3.4} we know that when $  -\lambda< x < \frac{-\lambda\,+\,\sqrt{\lambda^2\,+\,4\,n}}{2} $ and $x$ closed to $\frac{-\lambda\,+\,\sqrt{\lambda^2\,+\,4\,n}}{2}$, $B_x$ is in the second quadrant. Since the mapping $(-\lambda, \infty) \rightarrow \mathbb{H}, x \mapsto B_x $ is continuous, the previous results imply that there exists $\hat{x} \in (-\lambda, \frac{-\lambda\,+\,\sqrt{\lambda^2\,+\,4\,n}}{2} )$ such that $B_{\hat{x}}$ lies in  $r$-axis. Suppose $P(S[\hat{x}](\hat{s})) = B_{\hat{x}}$, then the curve $[0, 2\hat{s}] \rightarrow \mathbb{H},\, s\mapsto P(S[\hat{x}](s))$ will generate  an embedded $\lambda$-hypersurface by (\ref{0316eq2.1}), which is diffeomorphic to a  sphere $\mathbb{S}^n$ and is not isometric to the sphere $S^n(\frac{-\lambda + \sqrt{\lambda^2 + 4n }}{2})$. Moreover, by (\ref{0426eq2.3}) and lemma \ref{0316lem2.1}, all the principal curvatures of the hypersurface we construct is positive. In other words, the hypersurface we construct is convex. \qed

\begin{remark}
According to some sketches, we know, for $n \ge 2$ and some $ \lambda \le -\frac{2}{\sqrt{n+2}} $,  there exists an  embedded $\lambda$-hypersurface $\Sigma^n \subset \R^{n+1}$ which is diffeomorphic to $\mathbb{S}^n$ and is not $S^n(\frac{-\lambda + \sqrt{\lambda^2 + 4n }}{2})$ (see Figure \ref{0321fig4.2}, Figure \ref{0321fig4.3}, Figure \ref{0426fig4.3} and Figure \ref{0426fig4.4}).
\end{remark}
\noindent
\begin{figure}[ht]
	\begin{minipage}[t]{0.5\linewidth}
		\centering
		\includegraphics[scale=0.16]{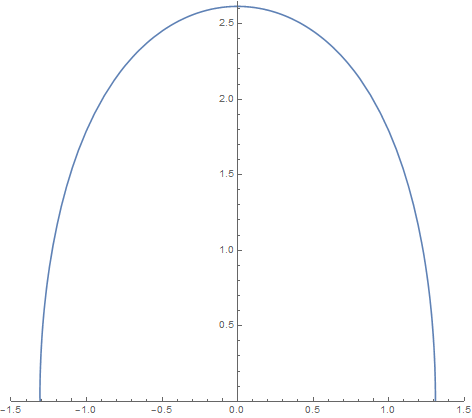}
		\caption{\,$n=2,\,\lambda=-1,\,\hat{x}\approx1.31$.}
		\label{0321fig4.2}
	\end{minipage}%
	\begin{minipage}[t]{0.5\linewidth}
		\centering
		\includegraphics[scale=0.14]{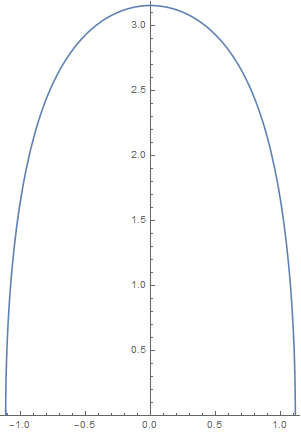}
		\caption{\,$n=3,\,\lambda=-0.9,\,\hat{x}\approx1.11$.}
		\label{0321fig4.3}
	\end{minipage}
\end{figure}

%
%\begin{figure}[hbtp]
% \begin{center}
%\ \ \ \  \includegraphics[height=1cm, width=1cm, bb =2 1 450 270 ]{n=2,lambda=-1.png}\ \ \ \ \ \ \ \ \ \  \ \ \ \ \ \ \ \ \ \ \ \ \ \ \ \ \ \ \ \ \ \ \
%\includegraphics[height=1cm, width=1cm, bb =2 1 1041 370 ]{n=3,lambda=-09.png}
%\end{center}
%\caption{ The graph of profile curve of compact $0$-hypersurface,
%$0$-hypersurface and half of $0$-hypersurface, here $n=2$. }
%
%\label{thegraphs}
%\end{figure}
%

\noindent
\begin{figure}[ht]
		\begin{minipage}[t]{0.5\linewidth}
	\centering
	\includegraphics[scale=0.4]{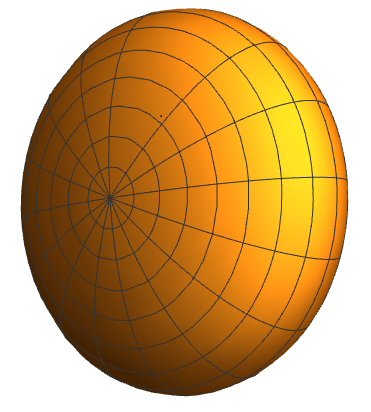}
	\caption{\ The graph of the $\lambda$-hypersurface generated by the profile curve in Figure 4.1}
	\label{0426fig4.3}
	\end{minipage}%
	\begin{minipage}[t]{0.5\linewidth}
	\centering
	\includegraphics[scale=0.4]{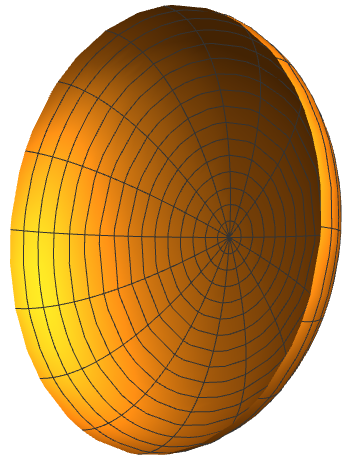}
	\caption{\ The graph of half of  the $\lambda$-hypersurface generated by the profile curve in Figure 4.1}
	\label{0426fig4.4}
	\end{minipage}
\end{figure}

\begin{remark}
In the proof of Theorem \ref{0316thm1.1}, we can not assure that the point $\hat{x}$ that we find in  $ (-\lambda, \frac{-\lambda\,+\,\sqrt{\lambda^2\,+\,4\,n}}{2} )$ is unique. According to some sketches, for some $n$ and $\lambda<0$ {\rm(}for example, $n = 3$ and $\lambda = -1$ {\rm )},
there may be more than one points $\hat{x}_1$, $\hat{x}_2,\cdots$ in $ (-\lambda, \frac{-\lambda\,+\,\sqrt{\lambda^2\,+\,4\,n}}{2} )$ so that $$P(S[\hat{x}_1](s)),P(S[\hat{x}_2](s)), \cdots$$ will generate embedded $\lambda$-hypersurfaces by (\ref{0316eq2.1}) (see Figure \ref{0321fig4.5} and Figure \ref{0321fig4.6}).
\end{remark}
\noindent
\begin{figure}[ht]
	\begin{minipage}[t]{0.5\linewidth}
		\centering
		\includegraphics[scale=0.18]{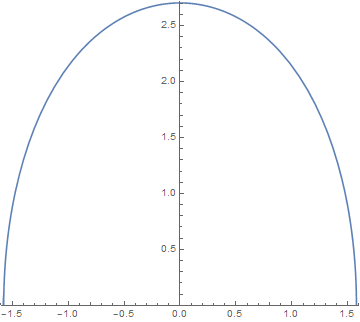}
		\caption{\ $n=3,\,\lambda=-1,\,\hat{x} \approx 1.57$.}
		\label{0321fig4.5}
	\end{minipage}%
	\begin{minipage}[t]{0.5\linewidth}
		\centering
		\includegraphics[scale=0.14]{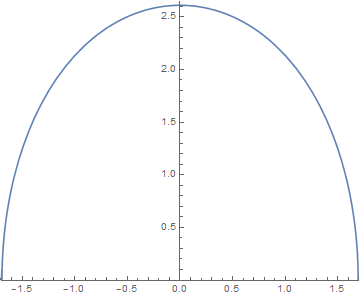}
		\caption{\ $n=3,\,\lambda=-1,\,\hat{x} \approx 1.69$.}
		\label{0321fig4.6}
	\end{minipage}
\end{figure}

%\noindent{\bf Acknowledgements}
%The authors would like to express their sincere gratitude to the referee for
%valuable comments that will help to improve the quality and accuracy of the manuscript.

\end{document}